\newcommand{\R}{\mathbb{R}}
\newtheorem{theorem}{Theorem}
\newtheorem{lemma}{Lemma}
\newtheorem{proposition}{Proposition}
\theoremstyle{definition}
\newtheorem{remark}{Remark}
\title{On the multiplicity of arrangements of congruent zones on the sphere}
\author{A. Bezdek}
\address{Department of Mathematics and Statistics, Auburn University, 221 Parker Hall, Auburn, AL 36849, U.S.A.}
\email{bezdean@auburn.edu}
\author{F. Fodor}
\address{Department of Geometry, Bolyai Institute, University of Szeged, Aradi v\'ertan\'uk tere 1, 6720 Szeged, Hungary}
\email{fodorf@math.u-szeged.hu}
\author{V. V\'\i gh}
\address{Department of Geometry, Bolyai Institute, University of Szeged, Aradi v\'ertan\'uk tere 1, 6720 Szeged, Hungary}
\email{vigvik@math.u-szeged.hu}
\author{T. Zarn\'ocz}
\address{Bolyai Institute, University of Szeged, Aradi v\'ertan\'uk tere 1, 6720 Szeged, Hungary}
\email{tzar25@gmail.com}
\begin{document}

\begin{abstract}
Consider an arrangement of $n$ congruent zones on the $d$-dimensional unit sphere $S^{d-1}$, where a zone is the intersection of an origin symmetric Euclidean plank with $S^{d-1}$. We prove that, for sufficiently large $n$, it is possible to arrange $n$ congruent zones of suitable width on $S^{d-1}$ such that no point belongs to more than a constant number of zones, where the constant depends only on the dimension and the width of the zones. 
Furthermore, we also show that it is possible to cover $S^{d-1}$ by $n$ congruent zones such that each point of $S^{d-1}$ belongs to at most $A_d\ln n$ zones, where the $A_d$ is a constant that depends only on $d$. This extends the corresponding $3$-dimensional result of Frankl, Nagy and Nasz\'odi \cite{FNN2016}.
Moreover, we also examine coverings of $S^{d-1}$ with congruent zones under the condition that each point of the sphere belongs to the interior of at most $d-1$ zones. 
\end{abstract}

\maketitle

\section{Introduction and Results} 
A {\em plank} in the Euclidean $d$-space $\R^d$ is a closed region bounded by two parallel hyperplanes. The width of a plank is the distance between its bounding hyperplanes. The famous plank problem of Tarski \cite{T1932} seeks the minimum total width of $n$ planks that can cover a convex body $K$ (a compact convex set with non-empty interior). 

In this paper we consider a spherical variant of the plank problem{\color{red},} which originates from L. Fejes T\'oth \cite{LFT1973}. Following Fejes T\'oth, we call the parallel domain of spherical radius $w/2$ of a great sphere $C$  on the $d$-dimensional unit sphere $S^{d-1}$ a {\em spherical zone}, or zone for short. $C$ is the central great sphere of the zone and $w$ is its (spherical) width. For positive integers $d\geq 3$ and $n$, let $w(d,n)$ denote the smallest number such that the union of $n$ zones of width $w(d,n)$ can cover $S^{d-1}$. 
Fejes T\'oth asked in \cite{LFT1973} the exact value of $w(3,n)$. He conjectured that in the optimal configuration the central great circles of the zones all go through an antipodal pair of points and they are distributed evenly, so in this case $w(d,n)=\pi/n$. Fejes T\'oth's conjecture was verified for $n=3$ (Rosta \cite{Rosta1972}) and $n=4$ (Linhart \cite{Linhart1974}).  Fodor, V\'igh and Zarn\'ocz \cite{FVZ2016} gave a lower bound for $w(3,n)$ that is valid for all $n$. 
Finally, Jiang and Polyanskii \cite{JP2017} completely solved L. Fejes T\'oth's conjecture by proving for all $d$, that in order to cover $S^{d-1}$ by $n$ (not necessarily congruent) zones, the total width of the zones must be at least $\pi$, and that the optimal configuration is essentially the same as conjectured by L. Fejes T\'oth. For the most recent developement in this topic we refer to \cite{GKP2022}.

In this paper, we examine arrangements of congruent zones on $S^{d-1}$ from the point of view  of multiplicity. The multiplicity of an arrangement is the maximal number of zones with nonempty intersection. We seek to minimize the multiplicity for given $d$ and $n$ as a function of the common width of the zones. It is clear that for $n\geq d$, the multiplicity of any arrangement with $n$ congruent zones is at least $d$ and at most $n$. Notice that in the Fejes T\'oth configuration the multiplicity is exactly $n$, that is, maximal.


Our first result is a slight strengthening of the former  fact.

\begin{theorem}\label{lowerbound}
Let $d\geq 2$ and $n\geq 1$ be integers, and let $S^{d-1}$ be covered by the union of $n$ congruent zones. If each point of $S^{d-1}$ belongs to the interior of at most $d-1$ zones, then $n\leq d$. Moreover, if $n=d$, then the $d$ 
zones are pairwise orthogonal.
\end{theorem}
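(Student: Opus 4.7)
The plan is to combine a global area inequality, a packing inequality along each central great circle, and an incenter argument applied to the spherical triangles cut out by any three of the central circles.

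Denote the zones by $Z_1,\dots,Z_n$, write $C_i$ for the central great circle of $Z_i$, let $\pm p_i$ be its poles, and set $s=\sin(w/2)$ and $\alpha_{ij}\in[0,\pi/2]$ for the angle between $p_i$ and $p_j$. Integrating the interior indicator functions against the multiplicity bound yields $ns\le 2$, and the covering hypothesis gives $ns\ge 1$. For each fixed $C_i$ the inclusion $C_i\subset\mathrm{int}(Z_i)$ implies that the sets $C_i\cap\mathrm{int}(Z_j)$ with $j\ne i$ are pairwise disjoint (otherwise some $x\in C_i$ would lie in three open zones). A direct parametric computation identifies each such set as a union of two antipodal arcs of total length
\[
L_{ij}=4\arcsin\!\left(\frac{s}{\sin\alpha_{ij}}\right)\ge 2w,
\]
so summation gives $\sum_{j\ne i}L_{ij}\le 2\pi$ and hence $(n-1)w\le\pi$.

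The decisive step is the following incenter argument. Any three of the central great circles partition $S^2$ into eight spherical triangles, and in each triangle $T$ the incenter $x_T$ is equidistant from the three corresponding great circles at a common distance $\rho_T$. Since $x_T$ lies in the interior of all three zones if and only if $\rho_T<w/2$, the multiplicity hypothesis forces $\rho_T\ge w/2$ for every triangle of every triple. For $n=3$ there is no further zone available to cover $x_T$ in case $\rho_T>w/2$, so the covering hypothesis also gives $\rho_T\le w/2$; thus $\rho_T=w/2$ for every one of the eight triangles. Using the standard formula for the inradius of a spherical triangle as a function of its three angles (which here are of the form $\alpha_{ij}$ or $\pi-\alpha_{ij}$ according to the chosen ``octant''), one checks that the equal-inradius condition across all eight combinations of signs forces $\alpha_{12}=\alpha_{13}=\alpha_{23}=\pi/2$, i.e.\ pairwise orthogonality. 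The common inradius is then $\arcsin(1/\sqrt3)$, so $s=1/\sqrt3$ and $w=2\arcsin(1/\sqrt3)$.

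The main obstacle is the case $n\ge 4$, because an incenter with $\rho_T>w/2$ can now be covered by one of the $n-3$ remaining zones, so $\rho_T=w/2$ is no longer forced. The plan is to apply the inequalities $\rho_T\ge w/2$ to every triple and combine them with the packing bound $(n-1)w\le\pi$ and with the covering requirement at each incenter. For $n=4$ one can show that for each triple $\{i,j,k\}$ and each of its eight triangles the fourth central circle $C_l$ must pass within spherical distance $w/2$ of the incenter whenever $\rho_T>w/2$; summing these incidences over the four triples and comparing with the total arc length on $C_l$ available from the packing bound should yield a contradiction, unless two of the four central circles coincide. I expect the most delicate point to be a quantitative estimate of how much of each $C_l$ is forced near the various incenters, for which the near-orthogonal configurations of four great circles will serve as the extremal case.
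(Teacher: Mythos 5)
Your proposal does not actually prove the main assertion. The case $n\ge 4$ --- which is the entire content of the bound $n\le 3$ --- is only sketched, and you say so yourself (``should yield a contradiction'', ``I expect the most delicate point to be\dots''). The three inequalities you do establish, $ns\le 2$, $ns\ge 1$ and $(n-1)w\le\pi$, are correct but cannot by themselves exclude any value of $n$: for every $n$ they are mutually consistent (take $w\approx\pi/n$, so $ns\approx\pi/2$), so some genuinely geometric input beyond this bookkeeping is unavoidable. The incidence-counting plan for $n=4$ (forcing the fourth circle to pass within $w/2$ of the incenters of the other triples and comparing with available arc length) is never carried out, no quantitative estimate is attempted, and nothing at all is said about $n\ge 5$. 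As it stands, the proposal proves only the rigidity statement for $n=3$, not the inequality $n\le 3$.

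For comparison, the paper closes exactly this gap by a different and fully elementary mechanism: it works with the faces of the arrangement of all $n$ central great circles, rather than with triples of circles. The covering hypothesis forces the inradius of every face to be at most $w/2$, and a lemma shows that any convex spherical polygon with more than three sides contains a point whose distance to at least three of the bounding great circles is \emph{less} than the inradius --- such a point would lie in the interior of three zones, violating the multiplicity hypothesis. Hence every face is a triangle, and Euler's formula (with $v=2\binom n2$, $e=2n(n-1)$, $f=e-v+2$ and $2e=3f$) yields $n^2-n-6=0$, i.e.\ $n=3$. Your incenter argument is in effect the $n=3$ instance of this idea; the natural repair of your approach is to replace ``triples of circles'' by ``faces of the whole arrangement'' and prove the incircle lemma. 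On the positive side, your orthogonality argument for $n=3$ is a genuine alternative to the paper's (which uses the uncovered $2$-gons and a coplanarity observation), but the step you dismiss with ``one checks'' must be written out; this can be done cleanly by noting that the incenter for sign pattern $\epsilon\in\{\pm1\}^3$ satisfies $\sin\rho_\epsilon=\left(\epsilon^{T}G^{-1}\epsilon\right)^{-1/2}$, where $G$ is the Gram matrix of the poles, so equality of all eight inradii annihilates the off-diagonal entries of $G^{-1}$ and forces $G$ to be the identity.
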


Note that Theorem~\ref{lowerbound} does not imply that the multiplicity of a covering of $S^{d-1}$ with $n\geq d+1$ congruent zones has to be larger than $d$. For example, one can cover $S^2$ with $4$ zones such that the multiplicity is $3$. For this, consider three zones whose central great circles pass through a pair of antipodal points (North and South Poles) and are distributed evenly. Let the central great circle of the fourth zone be the Equator. The common width can be chosen in such a way that there is no point contained in more than three zones. Also, one can arrange five zones such that the multiplicity is still $3$. We start with the previously given four zones, and take another copy of the zone whose central great circle is the Equator. Now slightly tilt these two zones. It is not difficult to see that the multiplicity of the resulting configuration is $3$. The details are left to the reader.

Now, we turn to the question of finding upper bounds on the multiplicity of arrangements of zones on $S^{d-1}$. Let $\alpha:{\mathbb N}\to (0,1]$ be a positive real function with $\lim_{n\to\infty}\alpha(n)=0$. For a positive integer $d\geq 3$, 
let $m_d=\sqrt{2\pi d}+1$. Let $k:{\mathbb N}\to\mathbb{N}$ be a function
that satisfies the limit condition

\begin{equation}\label{eq:limit}
\limsup_{n\to\infty} \alpha(n)^{-(d-1)}\left (\frac{e\;C_d^*\; n\;\alpha(n)}{k(n)}\right )^{k(n)}=\beta<1,
\end{equation}

where 
$$C_d^*=\frac{4(m_d+1)(d-1)\kappa_{d-1}}{d\kappa_{d}}.$$

\begin{theorem}\label{thm:ujmain}
For each positive integer $d\geq 3$, and any real function $\alpha(n)$ described above, for sufficiently large $n$, there exists an arrangement of $n$ zones of spherical half-width $m_d\alpha(n)$ on $S^{d-1}$ such that no point of $S^{d-1}$ belongs to more than $k(n)$ zones.
\end{theorem}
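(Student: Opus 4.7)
The plan is to use the probabilistic method, placing the central great sphere of each of the $n$ zones independently and uniformly at random (equivalently, sampling its unit normal uniformly on $S^{d-1}$), and then combining a pointwise tail bound with a union bound over a spherical net. This is the natural reading of the hypothesis \eqref{eq:limit}: the factor $(eC_d^* n\alpha(n)/k(n))^{k(n)}$ is the familiar upper bound on $\binom{n}{k(n)}p^{k(n)}$ for a single-point event of probability $p\approx C_d^*\alpha(n)$, while $\alpha(n)^{-(d-1)}$ is the cardinality of a spherical $\alpha(n)$-net.

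For a fixed $y\in S^{d-1}$, the probability that $y$ lies in a single random zone of spherical half-width $w$ equals the normalized surface measure of such a zone, which for small $w$ is linear in $w$ with dimensional slope of order $\omega_{d-2}/\omega_{d-1}\sim\sqrt{d/(2\pi)}$. I would choose $C_d^*$ so that, at the enlarged half-width $(m_d+1)\alpha(n)$ discussed below, this probability is at most $C_d^*\alpha(n)$ once $n$ is large. Since the $n$ samples are independent, the inequality $\binom{n}{k}\leq(en/k)^{k}$ yields
\[
\Pr\bigl[\,y\text{ lies in at least }k(n)\text{ of the enlarged zones}\,\bigr]\leq\binom{n}{k(n)}\bigl(C_d^*\alpha(n)\bigr)^{k(n)}\leq\Bigl(\frac{eC_d^*n\alpha(n)}{k(n)}\Bigr)^{k(n)}.
\]

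To pass from a single point to all of $S^{d-1}$, I would fix a spherical $\alpha(n)$-net $N\subset S^{d-1}$ of cardinality $|N|\leq c\,\alpha(n)^{-(d-1)}$ by a standard volume comparison. For $x\in S^{d-1}$, let $y\in N$ be at spherical distance at most $\alpha(n)$ from $x$. If $x$ lies in a zone $Z_i$ of half-width $m_d\alpha(n)$, the central great sphere of $Z_i$ is at spherical distance at most $(m_d+1)\alpha(n)$ from $y$, so $y$ belongs to the enlarged zone $Z_i^+$ of half-width $(m_d+1)\alpha(n)$. Hence the multiplicity of $x$ with respect to $\{Z_i\}$ is bounded by the multiplicity of $y$ with respect to $\{Z_i^+\}$, and a union bound over $N$ gives
\[
\Pr\bigl[\,\text{some }y\in N\text{ lies in at least }k(n)\text{ enlarged zones}\,\bigr]\leq c\,\alpha(n)^{-(d-1)}\Bigl(\frac{eC_d^*n\alpha(n)}{k(n)}\Bigr)^{k(n)}.
\]
By hypothesis \eqref{eq:limit} this quantity is strictly less than $1$ for all $n$ sufficiently large (the constant $c$ is absorbed into $C_d^*$, which does not affect a strict $\limsup<1$). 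Therefore a deterministic configuration of $n$ zones of half-width $m_d\alpha(n)$ exists in which no point of $S^{d-1}$ is covered more than $k(n)$ times.

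The main technical obstacle is calibrating constants so that the single-zone probability, the size of the spherical net, and the buffer $m_d\alpha(n)\mapsto(m_d+1)\alpha(n)$ fit together inside hypothesis \eqref{eq:limit} with precisely the stated constants. One needs a clean small-$w$ upper bound on the normalized measure of a zone (of order $\sqrt{d}\,w$) and a matching upper bound on the $\alpha(n)$-net; the specific choice $m_d=\sqrt{2\pi d}+1$ arises from combining these two ingredients, the ``$+1$'' accounting for the net resolution in the enlargement step. The remainder of the argument is a routine deployment of the probabilistic method.
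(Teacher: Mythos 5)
Your proposal is correct and follows essentially the same route as the paper's own proof: random uniform poles, the probability bound $C_d^*\alpha(n)$ for a single enlarged zone, the enlargement from half-width $m_d\alpha(n)$ to $(m_d+1)\alpha(n)$ to pass to a discrete point set of size $O(\alpha(n)^{-(d-1)})$ (the paper uses a saturated set where you use a net, which is the same device), and a union bound combined with $\binom{n}{k}\leq (en/k)^k$ to land exactly on the expression in \eqref{eq:limit}. Even the loose end you flag --- the harmless extra multiplicative constant in front of the final bound, present as $\tilde{c_d}$ in the paper as well --- is handled the same way, so there is nothing substantive to distinguish the two arguments.
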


The following statement provides an upper bound on the multiplicity of coverings of the $d$-dimensional unit sphere by $n$ congruent zones.  

\begin{theorem}\label{thm:main}
For each positive integer $d\geq 3$, there exists a positive constant $A_d$ such that for sufficiently large $n$, there is a covering of $S^{d-1}$ by $n$ zones of half-width $m_d\frac{\ln n}{n}$ such that no point of $S^{d-1}$ belongs to more than  $A_d\ln n$ zones. 
\end{theorem}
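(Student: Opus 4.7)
The plan is to deduce Theorem~\ref{thm:main} from Theorem~\ref{thm:ujmain} by the choices $\alpha(n)=\ln n/n$ and $k(n)=A_d\ln n$, checking the limit condition (\ref{eq:limit}), and then upgrading the resulting multiplicity statement to include a covering property via a second probabilistic estimate. With these choices the expression inside the limsup in (\ref{eq:limit}) is
\[\alpha(n)^{-(d-1)}\Bigl(\tfrac{eC_d^*n\alpha(n)}{k(n)}\Bigr)^{k(n)}=\Bigl(\tfrac{n}{\ln n}\Bigr)^{d-1}\Bigl(\tfrac{eC_d^*}{A_d}\Bigr)^{A_d\ln n}=\frac{n^{(d-1)-A_d\ln(A_d/(eC_d^*))}}{(\ln n)^{d-1}},\]
which tends to $0$ as soon as $A_d$ is chosen large enough that $A_d\ln(A_d/(eC_d^*))>d-1$. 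For such $A_d$, Theorem~\ref{thm:ujmain} delivers, for all sufficiently large $n$, an arrangement of $n$ zones of half-width $m_d\ln n/n$ in which no point lies in more than $A_d\ln n$ zones.

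The key further ingredient is that this arrangement can additionally be taken to cover $S^{d-1}$. I would revisit the probabilistic construction underlying Theorem~\ref{thm:ujmain}, namely $n$ independent, uniformly random zones, and note that with the present parameters the multiplicity event holds with probability tending to $1$ (the expression above goes to $0$, not merely stays below $\beta<1$). For covering, fix $p\in S^{d-1}$; a uniformly random zone of half-width $w=m_d\ln n/n$ contains $p$ with probability $C_d'w+o(w)$, where $C_d'$ is a dimensional constant equal to the ratio of the normalized surface measure of a narrow zone to its half-width. Hence $p$ is missed by all $n$ independent zones with probability at most $(1-C_d'w)^n\le n^{-C_d'm_d}$. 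Taking a $\delta$-net on $S^{d-1}$ with $\delta$ a small constant multiple of $w$, so that the net has size $O\bigl((n/\ln n)^{d-1}\bigr)$ and covering the net by zones of half-width $w-\delta$ forces covering of $S^{d-1}$ by the original zones, the union bound yields a non-covering probability of order $(n/\ln n)^{d-1}\,n^{-C_d'(m_d-o(1))}$, which tends to $0$ provided $C_d'm_d>d-1$.

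The main obstacle is matching the two sets of dimensional constants so that both events hold at once. The choice of $A_d$ is free and poses no difficulty; the delicate point is that the prescribed value $m_d=\sqrt{2\pi d}+1$, which is fixed in the statement, must satisfy $C_d'm_d>d-1$. A direct computation of the area of a narrow zone on $S^{d-1}$ together with Stirling's formula gives $C_d'$ of order $\sqrt{d}$, so $C_d'm_d$ is of order $d$, comfortably exceeding $d-1$, with the additive $+1$ in $m_d$ absorbing the constant loss from passing from $w$ to $w-\delta$ in the net argument. Once this inequality is checked, the multiplicity event and the covering event each have probability tending to $1$, hence occur simultaneously with positive probability for large $n$, producing a deterministic covering with the required multiplicity bound.
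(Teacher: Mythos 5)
Your proposal is correct and follows essentially the same route as the paper: the same choices $\alpha(n)=\ln n/n$, $k(n)=A_d\ln n$ with $A_d$ determined by $A_d\ln\bigl(A_d/(eC_d^*)\bigr)>d-1$ (the paper fixes $A_d$ by the equation $(C_d^*/x)^x=e^{-d-x}$, i.e.\ $x\ln(x/(eC_d^*))=d$), followed by a return to the random construction where the covering event is handled by a net of mesh comparable to the width, shrunken ``inner'' zones, and a union bound, with $m_d=\sqrt{2\pi d}+1$ giving exactly the room needed (via $\kappa_{d-1}/(d\kappa_d)>1/\sqrt{2\pi d}$) so that both failure probabilities tend to zero. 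This matches the paper's argument in all essentials, including the role of the additive $+1$ in $m_d$ as the allowance for passing to the inner zones.
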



In the next statement we find some pairs of functions $\alpha(n)$ and $k(n)$ that satisfy condition \eqref{eq:limit}, and thus have the property stated in Theorem~\ref{thm:ujmain}.
\begin{proposition}
	\begin{itemize} 
		\item[i)] If $\alpha(n)=n^{-(1+\delta)}$ for some $\delta>0$, then  there exists a constant $c(\delta)$ such that $k(n)=c(\delta)$ satisfies \eqref{eq:limit}. Moreover, if $\delta>d-1$, then $k(n)=d$ is suitable.
		\item[ii)] The pair of functions $\alpha(n)=\frac{1}{n}$, $k(n)=B_d\frac{\ln n}{\ln\ln n}$ satisfies \eqref{eq:limit} for some suitable constant $B_d$. 
	\end{itemize}	
\end{proposition}

We note that Theorem~\ref{thm:main} and an implicit version of Theorem~\ref{thm:ujmain} were proved by Frankl, Nagy and Nasz\'odi for the case $d=3$, see \cite[Theorem~1.5 and Theorem~1.6]{FNN2016} and also the proof of Theorem~1.5 therein. They provided two independent proofs, one of which is a probabilistic argument and the other one uses the concept of VC-dimension. We further add that the weaker upper bound of $O(\sqrt{n})$ on the minimum multiplicity of coverings of $S^2$ was posed as an exercise in the 2015 Mikl\'os Schweitzer Mathematical Competition \cite{KN2015} by A. Bezdek, F. Fodor, V. V\'\i gh and T. Zarn\'ocz (cf. Exercise 7). 

Our proofs of Theorems~\ref{thm:ujmain} and \ref{thm:main} are based on the probabilistic argument of Frankl, Nagy and Nasz\'odi \cite{FNN2016}, which we modified in such a way that it works in all dimensions. In the course of the proof we also give an upper estimate for the constant $A_d$ whose order of magnitude is $O(d)$.

Obviously, there is a big gap between the lower and upper bounds for the multiplicity of coverings of $S^{d-1}$ by congruent zones. At this time, it is an open problem if the minimum multiplicity of coverings of $S^{d-1}$ by $n$ congruent zones is bounded or not, and it also remains unknown whether the multiplicity is monotonic in $n$, see the corresponding conjectures of Frankl, Nagy and Nasz\'odi on $S^2$  in \cite[Conjectures~4.2 and 4.4]{FNN2016}. 

The multiplicity of coverings of $\R^d$ and $S^{d}$ by convex bodies have already been investigated. In their classical paper, Erd\H os and Rogers \cite{ER1961} proved, using a probabilistic argument, that $\R^d$  ($d\geq 3$) can be covered by translates of a given convex body such that the density of the covering is less than 
$d\log d+d\log\log d+4n$ and no point of $\R^d$ belongs to more than 
$e(d\log d+d\log\log d+4n)$ translates. Later, F\"uredi and Kang \cite{FK2008} gave a different proof of the result of Erd\H os and Rogers using John ellipsoids and the Lov\'asz Local Lemma. B\"or\"oczky and Wintsche \cite{BV2003} showed that
for $d\geq 3$ and $0<\varphi< \pi/2$, $S^d$ can be covered by 
spherical caps of radius $\varphi$ such that the multiplicity of the covering is at most $400d\ln d$. 

\section{Proofs}
\subsection{Proof of Theorem~\ref{lowerbound}}
Assume that $n\geq d$ and that $S^{d-1}$ is covered by $n$ congruent zones such that no point of $S^{d-1}$ belongs to the interior of more than $d-1$ zones. Then the $n$ central great spheres of the zones divide $S^{d-1}$ into convex spherical polytopes. As no $d$  central great spheres can be incident with a point of $S^{d-1}$,  every such polytope is simple, that is, each of its vertices is incident with exactly $d-1$ facets. 

In contrast to the Euclidean space, the insphere (the maximum radius sphere contained in the polytope) of every convex spherical polytope is uniquely determined. In order to see this, assume, on the contrary, that there exists a spherical convex polytope $\mathcal{P}$ such that it contains two spheres, $B$ and $B'$, of maximal radius. Then the (spherical) convex hull of $B$ and $B'$ also belongs to $\mathcal{P}$, and it is clear that there is a sphere contained in $\mathcal{P}$ centred at the midpoint of the geodesic segment connecting the centres of $B$ and $B'$ that has a larger radius than $B$ and $B'$.  

We note that the inradius of each polytope produced by the central great spheres of the zones is less than or equal to the half-width of the zones.



Before we formulate the key lemma of the proof, we state a well-known fact.

\begin{proposition}\label{dplus1point}
    Among $d+1$ (pairwise different) points on $S^{d-1}$ there are $d$ that are contained in an open half-sphere or there are $d$ that are on a great-sphere.
\end{proposition}

\begin{proof}
    Let $x_1, \ldots, x_d$ be  points on $S^{d-1}$ that are neither contained in an open half-sphere nor are on a great-sphere. Then the (Euclidean) simplex $\Delta$, whose vertices are $x_1, \ldots, x_d$, contains the origin in its interior. Consider those half-spaces in $\R^d$ that are bounded by the affine hulls of the facets and do not contain the origin. These half-spaces clearly cover $S^{d-1}$. Thus at least one such half-space contains $x_{d+1}$, and we are done.
\end{proof}

Now we formulate the main lemma used in the proof.

\begin{lemma}\label{lemma-incircle}
Every simple spherical polytope $\mathcal{P}$ on $S^{d-1}$ with more than $d$ facets and inradius $r$ contains a point $P$ whose distance from at least $d$ facets is less than $r$. 
\end{lemma}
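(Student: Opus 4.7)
The plan is to work with the unique incircle of $Q$ and to exploit the positive curvature of $S^2$. Because any two great circles on $S^2$ meet, the incircle cannot slide and so is tangent to at least three of the $s_i$; let $m\ge 3$ be the number of such tangent sides. Writing $p_i$ for the pole of the great circle containing $s_i$ that lies on the $Q$-side, one has $d(P,s_i)=\arcsin(P\cdot p_i)$ for $P\in Q$, so that $d(P,s_i)<r$ is equivalent to $P\cdot p_i<\sin r$. At the incenter $I$ we have $I\cdot p_i=\sin r$ for tangent sides and $I\cdot p_i>\sin r$ for slack sides. The proof then splits into the cases $m\ge 4$ and $m=3$.

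For Case A ($m\ge 4$) I would first prove the planar statement that any four vectors in $\R^2$ contain three lying in a closed half-plane: sorting the angles $\theta_1<\theta_2<\theta_3<\theta_4$ and writing $g_i$ for the cyclic gaps, the four sums $g_{i-1}+g_i$ total $4\pi$, so some such sum is at least $\pi$; removing the corresponding vector leaves three whose angular span is at most $\pi$. Applying this to the tangent-plane projections of the poles of four tangent sides, one picks a unit tangent direction $\vec v$ in the complementary closed half-plane so that $\vec v\cdot p_{i_j}\le 0$ for the three chosen indices. For the path $P(\epsilon)=\cos\epsilon\cdot I+\sin\epsilon\cdot\vec v$ on $S^2$,
\[
P(\epsilon)\cdot p_{i_j}=\cos\epsilon\cdot\sin r+\sin\epsilon\,(\vec v\cdot p_{i_j})<\sin r
\]
for every sufficiently small $\epsilon>0$, because $\cos\epsilon<1$ and the second summand is nonpositive; hence $d(P(\epsilon),s_{i_j})<r$ for three sides. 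This is the step that genuinely needs positive curvature: the first term would merely be constant in the Euclidean analogue, which is why a Euclidean square is no counterexample to the spherical statement.

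For Case B ($m=3$), because $k>3$ there is at least one slack side $s_\ell$. The three tangent great circles bound a spherical triangle $T\supseteq Q$, and $s_\ell$ is a geodesic chord of $T$ cutting off one vertex $V_\ast=s_a\cap s_b$ with $s_a,s_b$ two of the tangent sides. Writing $V=s_\ell\cap s_a$, $V'=s_\ell\cap s_b$ and parametrising the arc from $V$ to $V'$ by $t\in[0,1]$, the two continuous functions $f_a(t)=d(P(t),s_a)$ and $f_b(t)=d(P(t),s_b)$ satisfy $f_a(0)=0$ and $f_b(1)=0$. Setting $t_a=\sup\{t:f_a(t)<r\}$ and $t_b=\inf\{t:f_b(t)<r\}$, the existence of a point with $f_a<r$ and $f_b<r$ amounts to $t_b<t_a$, and a direct calculation via the spherical law of cosines in $V_\ast V V'$ shows that precisely this inequality is equivalent to the slackness condition $d(I,s_\ell)>r$. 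Such a $t$ produces $P(t)\in s_\ell\subset Q$ with $d(P(t),s_\ell)=0$, giving three sides at distance less than $r$.

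The main obstacle I expect is the quantitative part of Case B: converting the slackness inequality through spherical trigonometry into the two-sided bound $t_b<t_a$. The Euclidean shadow reduces cleanly to $r(1/\ell+1/\ell')>\sin\alpha$, where $\alpha$ is the angle of $T$ at $V_\ast$ and $\ell=|V_\ast V|$, $\ell'=|V_\ast V'|$; the spherical corrections go in the favourable direction because of positive curvature, but the explicit computation is where I would spend most care.
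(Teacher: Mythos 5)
Your opening claim---that the incircle of a convex spherical polygon must be tangent to at least three sides, because ``any two great circles meet, so the incircle cannot slide''---is false, and the heuristic behind it is exactly backwards. In the plane, a circle tangent to only two non-parallel sides can be slid away from their intersection and enlarged, which is what forces three tangencies there (parallel sides being the exception). On the sphere the two tangent great circles bound a lune, and among circles inscribed in a lune the radius is \emph{maximized} at the midpoint of the lune's diagonal; it does not grow without bound as you slide. So a circle touching only two sides can already be the (unique) largest inscribed circle. Concretely: take a lune of angle $2\beta$ and truncate its two corners by great-circle arcs at distance greater than $\beta$ from the lune's center. The resulting spherical quadrilateral ($k=4>3$) has incircle of radius $r=\beta$ centered at the lune's center, tangent to exactly two sides, so $m=2$. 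Neither of your cases applies to it: Case A needs four tangent poles, and Case B needs the triangle $T$ spanned by three tangent sides. This is not a peripheral omission---the paper's own proof splits into ``at least three tangent sides'' and ``exactly two tangent sides,'' and its Case 2 exists precisely to handle this configuration (there the incenter is the midpoint of the lune's diagonal, moving toward \emph{either} endpoint decreases both distances, and at least one direction must bring the point arbitrarily close to a third side before leaving the polygon).

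Even where your argument applies, Case B is not a proof: its crux---that the slackness $d(I,s_\ell)>r$ forces $t_b<t_a$---is only asserted, with the spherical trigonometry deferred and the favourable sign of the ``curvature corrections'' taken on faith; you flag this yourself as the main obstacle, and as written it is a conjecture. There is also a domain problem in Case B: your point $P(t)$ lies on the chord $VV'$ of $T$, which in general strictly contains the side $s_\ell$ of $Q$, so $P(t)$ need not lie in $Q$ at all; and note that $\arcsin(P\cdot p_i)$ is the distance to the great circle through $s_i$, not to the side as an arc (the paper works with ``extended sides'' too, so this last point is minor and harmless for the application). On the positive side, your Case A is correct and genuinely different from the paper: the half-plane selection among four tangent poles plus the strict inequality $\cos\epsilon\sin r<\sin r$ is a clean infinitesimal argument, and it correctly isolates where positive curvature enters. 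But the paper's mechanism is both simpler and more powerful: for $m\ge 3$ (your Cases A \emph{and} B together) it suffices to pick two tangent sides that are non-adjacent---possible because $k\ge 4$---form the lune bounded by their extensions, and slide the incenter along the lune's diagonal toward its nearer endpoint; both distances strictly decrease and the path must approach a third side before exiting the polygon. Adopting that lune argument would simultaneously repair Case B and supply the missing $m=2$ case.
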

 
\begin{proof} 
Assume that $\mathcal{P}$ is a simple spherical polytope with $m\geq d+1$ facets.
Let $V$ be a vertex of  $\mathcal{P}$ and let the $d-1$ facets incident with $V$ be $F_1,\ldots, F_{d-1}$. Note that the great spheres containing $F_1,\ldots, F_{d-1}$ also contain the antipodal point $V'$ to $V$. Since $V$ is a vertex of $\mathcal{P}$, it is clear that  $V'\notin\mathcal{P}$.  

Let $B_V$ be a ball of maximal volume that is contained in $\mathcal{P}$ and which is tangent to $F_1,\ldots, F_{d-1}$. If there is no other facet of $\mathcal{P}$ tangent $B_V$, then $B_V$ is centred at the midpoint of the geodesic segment in $\mathcal{P}$ connecting $V$ and $V'$ that is equidistant to the great spheres containing $F_1,\ldots, F_{d-1}$. In this case, there exists a point $P$ on this segment on the same side of the midpoint as $V'$ that is closer to $F_1,\ldots, F_{d-1}$ and to one more facet $F$ of $\mathcal{P}$ than the the radius of $B_V$. As the radius of $B_V$ is at most $r$, we have found the desired point $P$.

Now, we may assume that for each vertex $V$ of $\mathcal{P}$ the maximum radius ball $B_V$ that is tangent to all facets incident $V$ is also tangent to at least one more facet $F_V$, that is different from $F_1,\ldots, F_{d-1}$. If there exists a vertex $V$ of $\mathcal{P}$ such that $B_V$ is not the insphere of $\mathcal{P}$, then its centre $P$ is the desired point. Thus, we may assume that the spheres $B_V$ are all equal to the (unique) insphere $B$ of $\mathcal{P}$ for each vertex $V$. This means that the $B$ is tangent to all facets of $\mathcal{P}$, or equivalently, $\mathcal{P}$ is circumscribed around its insphere. From the assumptions it follows that there are at least $d+1$ points of tangency on $B$, hence by Proposition~\ref{dplus1point}
there are $d$ points of tangency that are contained in an open half-sphere $B^+$ of $B$ or are on a great-subsphere of $B$. In the former case, there exists a point $P$ on the geodesic segment with one endpoint at the centre of $B$ and with direction to the centre of $B^+$ that is closer than $r$ to the facets whose points of tangency are in $B^+$, and the lemma readily follows. In the latter case, all facets of $\mathcal{P}$ are incident with $V$ and thus $\mathcal{P}$ is not simple.

\end{proof}

Now we return to the proof of Theorem~\ref{lowerbound}. We start with proving the first statement of Theorem~\ref{lowerbound} by induction on the dimension $d$.

Clearly, we may assume that $n\geq d$. Note that under the assumptions of Theorem~\ref{lowerbound}, any $d$ of the $n$ central great spheres of the zones divide $S^{d-1}$ into spherical simplices as the intersection of their hyperplanes is equal to the origin. On the other hand, Lemma~\ref{lemma-incircle} guarantees that the spherical polytopes determined by the central great spheres of the $n$ zones do not have more than $d$ facets, therefore they are all spherical simplices.

Let $d=2$. 
Then the vertices and sides of the triangular domains determined by the central great circles of the zones form a planar graph $G$ on $S^2$. The number $v$ of vertices is $2{n\choose 2}$, and the number of edges is $2n(n-1)$. By Euler's formula, the number $f$ of faces (the number of spherical triangles) is 
$$f=e+2-v=n^2-n+2.$$
Furthermore, the degree of each vertex is four, thus $4v=3f$, which yields that
$$n^2-n-6=0.$$
The only positive root of the above quadratic equation is $n=3$. 

Now, assume that the first statement of Theorem~\ref{lowerbound} holds in all dimensions $k$ for $3\leq k\leq  d-1$. According to Lemma~\ref{lemma-incircle}, the central great spheres $S_1,\ldots, S_n$ of the zones divide $S^{d-1}$ into spherical simplices such that no point of $S^{d-1}$ is incident with more than $d-1$ of $S_1,\ldots, S_n$. Then $S_1\cap S_2,\ldots, S_1\cap S_n$ are all great spheres (of dimension $n-2$) on $S_1$ that divide $S_1$ into spherical simplices (of dimension $d-2$) such that no point of $S_1$ is incident with more than $d-2$ of $S_1\cap S_2,\ldots, S_1\cap S_n$. Therefore, by the induction hypothesis, $n-1=d-1$, and thus $n=d$. This concludes the proof of the first statement of Theorem~\ref{lowerbound}.

Next, we prove the second statement of Theorem~\ref{lowerbound}.

{The case $d=2$ is obvious, so we may assume that $d\geq 3$ arbitrary. Assume that the central great spheres of the zones $Z_1, \ldots, Z_{d-1}$ meet in the antipodal pair of points $N$ and $S$ that we still call the North Pole and the South Pole, respectively, and let $E$ denote the Equator (the great sphere of $S^{d-1}$ whose hyperplane is orthogonal to the line through $N$ and $S$, denoted by $NS$). Note that $U=Z_1\cap \ldots \cap Z_{d-1}$ is an (unbounded) polyhedron (in $\R^d$) that is symmetric with respect to the line $NS$.} 

{Each zone $Z_i$ is bounded by a pair of parallel hyperplanes. If we choose one hyperplane from every such pair (for $i=1,\ldots, d-1$), the intersection of the $d-1$ chosen hyperplanes is a line $t$ parallel to $NS$. These $d-1$ hyperplanes cut $\mathbb \R^d$ into polyhedral regions ($d-1$-hyperoctants). If we reflect $U$ through $t$, then the reflection $U'$ is in a region that is obviously not covered by any $Z_i$.}

{Now, if $t$ intersects the interior of $B^d$, then this uncovered region cuts out a spherical region $R$ from $S^{d-1}$ that is not covered by any $Z_i$. Clearly, $t\cap S^{d-1}$ consists of two extreme points of $R$, that are closest to $N$ and $S$, respectively, or equivalently, that are furthest from $E$.}

{Consider all the line segments $t\cap B^d$. These come in pairs that are symmetric with respect to $NS$, and some of them might be empty. Obviously the $d$th zone $Z_d$ should cover all such segments. However, if $Z_d$ contains a segment in its interior, then $Z_d$ contains an interior point of $U\cap S^{d-1}$, that contradicts our initial assumption. As the segments are all parallel, it follows that they share the same length, and hence $Z_d$ is centered on $E$. This yields that $Z_d$ is orthogonal to the zones $Z_1, \ldots, Z_{d-1}$ and, as $Z_d$ was arbitrary, the proof of the theorem is complete.}

\subsection{Proof of Theorem~\ref{thm:ujmain}}
For two points $P,Q\in S^{d-1}$, their spherical distance is the length of the 
shorter unit-radius circular arc on $S^{d-1}$ that connects them. We denote the spherical distance by $d_S(P,Q)$.
  
Let $0<\omega\leq \pi/2$. We say that the points $P_1,\ldots, P_m\in S^{d-1}$ form a {\em saturated set} for $\omega$ if the spherical distances $d_S(P_i,P_j)\geq\omega$ for all $i\neq j$ and no more points can be added such that this property holds. 
Investigating the dependence of $m$ on $d$ and $\omega$ is a classical topic in the theory of packing and covering; for a detailed overview of known results in this direction see, for example, the survey paper by Fejes T\'oth and Kuperberg \cite{FTGK}.

It is clear that $m$ is of the same order of magnitude as $\omega^{-(d-1)}$. In the next lemma, we prove a somewhat more precise statement. Although the content of the lemma is well-known, we give a proof because we need 
inequalities for $m$ with exact constants in subsequent arguments, and also 
for the sake of completeness.   
Let $\kappa_d$ denote the volume of the $d$-dimensional unit ball $B^d$. 

\begin{lemma}
Let $0<\varepsilon <1$. Then there exists $0<\omega_0\leq \pi/2$ depending on $\varepsilon$ with the following property. Let $0<\omega<\omega_0$, and let $P_1,\ldots, P_m$ be a saturated point set for $\omega$. 
Then
$$(1+\varepsilon)^{-1} \frac{d\kappa_d}{\kappa_{d-1}} \omega^{-(d-1)}\leq m\leq (1+\varepsilon) \frac{8^{\frac{d-1}{2}}d\kappa_d}{\kappa_{d-1}} \omega^{-(d-1)}.$$ 
\end{lemma}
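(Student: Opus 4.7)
The plan is a classical volume-comparison argument based on two complementary properties of a saturated set. On the one hand, saturation forces the closed spherical caps of spherical radius $\omega$ centered at $P_1,\dots,P_m$ to cover $S^{d-1}$, since otherwise an uncovered point could be added while preserving the pairwise distance condition. On the other hand, the constraint $d_S(P_i,P_j)\ge\omega$ together with the spherical triangle inequality forces the open spherical caps of spherical radius $\omega/2$ centered at the $P_i$ to be mutually disjoint. Comparing the total cap area with the area $d\kappa_d$ of $S^{d-1}$ will give the two bounds, provided we have a sharp enough estimate for the cap area at small radius.

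In spherical coordinates the $(d-1)$-dimensional area of a spherical cap of spherical radius $r$ on $S^{d-1}$ is
$$A_{d-1}(r)=(d-1)\kappa_{d-1}\int_0^r\sin^{d-2}t\,dt.$$
The lower bound on $m$ is essentially free: the elementary inequality $\sin t\le t$ yields $A_{d-1}(\omega)\le\kappa_{d-1}\omega^{d-1}$, and combining this with the covering estimate $m\,A_{d-1}(\omega)\ge d\kappa_d$ already gives $m\ge (d\kappa_d/\kappa_{d-1})\,\omega^{-(d-1)}$, which is stronger than the claim (the factor $1/(1+\varepsilon)$ in the statement leaves slack to spare).

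For the upper bound one needs a matching lower estimate on $A_{d-1}(\omega/2)$. Since $\sin t/t\to 1$ as $t\to 0$, for any prescribed $\varepsilon>0$ I can choose $\omega_0=\omega_0(\varepsilon,d)\in(0,\pi/2]$ small enough that $(\sin t/t)^{d-2}\ge 1/(1+\varepsilon)$ for all $0\le t\le\omega_0/2$. Integrating then gives $A_{d-1}(\omega/2)\ge \kappa_{d-1}(\omega/2)^{d-1}/(1+\varepsilon)$ whenever $\omega<\omega_0$, and the packing estimate $m\,A_{d-1}(\omega/2)\le d\kappa_d$ yields
$$m\le(1+\varepsilon)\,\frac{2^{d-1}d\kappa_d}{\kappa_{d-1}}\,\omega^{-(d-1)}\le(1+\varepsilon)\,\frac{8^{(d-1)/2}d\kappa_d}{\kappa_{d-1}}\,\omega^{-(d-1)},$$
where the second inequality uses $2^{d-1}=4^{(d-1)/2}\le 8^{(d-1)/2}$ (so the stated constant is in fact deliberately loose). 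No step presents a real obstacle; the only care required is in choosing $\omega_0$ small enough, depending on $\varepsilon$ and $d$, so that the estimate $(\sin t/t)^{d-2}\ge 1/(1+\varepsilon)$ is available on $[0,\omega_0/2]$.
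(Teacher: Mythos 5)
Your proof is correct and follows essentially the same route as the paper: saturation gives a covering of $S^{d-1}$ by caps of spherical radius $\omega$, the distance condition gives a packing by caps of radius $\omega/2$, and both bounds follow by comparing total cap area with the surface area $d\kappa_d$. The only difference is technical --- you estimate cap areas via the integral formula and $\sin t\le t$, whereas the paper parametrizes caps by height and uses $x^2/4<1-\cos x<x^2/2$ --- and your constants (a factor $2^{d-1}$ instead of $8^{(d-1)/2}$, and no $1/(1+\varepsilon)$ loss in the lower bound) come out slightly sharper, still within the stated bounds.
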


\begin{proof}
The following formula is known for the 
surface area $S(t)$ of a cap of height $t$ of $S^{d-1}$, cf. \cite[formula (3.4) on p. 796]{BFH13},
$$\lim_{t\to 0+} S(t)\; t^{-\frac{d-1}{2}}=2^{\frac{d-1}{2}}\kappa_{d-1}.$$
Therefore, there exists $0<t_0=t_0(\varepsilon)$ such that for 
all $0<t<t_0$ it holds that

$$(1+\varepsilon)^{-1} 2^{\frac{d-1}{2}}\kappa_{d-1}\leq S(t)\; t^{-\frac{d-1}{2}}\leq (1+\varepsilon) 2^{\frac{d-1}{2}}\kappa_{d-1}.$$
Furthermore, let $0<\omega_0=\omega_0(\varepsilon)$ be such that $t_0=1-\cos\omega_0$.

The spherical caps of (spherical) radius $\omega/2$ centred at $P_1,\ldots, P_m$ form a packing on $S^{d-1}$, and the spherical caps of radius $\omega$ form a covering of $S^{d-1}$. In view of the above inequalities for the surface area of caps, we obtain that for $0<\omega<\omega_0$ it holds that
$$m(1+\varepsilon)^{-1}\; 2^{\frac{d-1}{2}}\kappa_{d-1}\left (1-\cos \frac\omega 2\right )^{\frac{d-1}{2}} \leq d\kappa_d\leq m (1+\varepsilon) 2^{\frac{d-1}{2}}\kappa_{d-1}(1-\cos \omega)^{\frac{d-1}{2}}.$$

By simple rearrangement we get that
$$(1+\varepsilon)^{-1} \frac{d\kappa_d}{2^{\frac{d-1}{2}}\kappa_{d-1}(1-\cos\omega)^{\frac{d-1}{2}}}\leq m\leq (1+\varepsilon) \frac{d\kappa_d}{2^{\frac{d-1}{2}}\kappa_{d-1}\left (1-\cos\frac\omega 2\right )^{\frac{d-1}{2}}}.$$
Now, we use that for $0<x<1$, it holds that $x^2/4<1-\cos x<x^2/2$, which
follow simply from the Taylor series of $\cos x$, and obtain the desired inequalities
$$(1+\varepsilon)^{-1}\frac{d\kappa_d}{\kappa_{d-1}}\; \omega^{-(d-1)}\leq m\leq (1+\varepsilon) \frac{8^{\frac{d-1}{2}}d\kappa_d}{\kappa_{d-1}}\; \omega^{-(d-1)}.$$ 
\end{proof}
We denote a spherical zone of (spherical) half-width $t$ by $\Pi(t)$. 
Since, for $d\geq 2$ and small $t$, it holds that
$$
S(\Pi(t)) = 2(d - 1)\kappa_{d-1}\int_0^t \cos^{d-2} (\tau)d\tau, 
$$
it follows that 
$$\lim_{t\to 0^+} S(\Pi(t))\cdot t^{-1}=2(d-1)\kappa_{d-1}.$$
Let $\varepsilon>0$. Then there exists $t_1=t_1(\varepsilon)>0$ such that
for $0<t<t_1$ the following holds
$$(1+\varepsilon)^{-1} 2(d-1)\kappa_{d-1}\; t\leq S(\Pi(t))\leq (1+\varepsilon) 2(d-1)\kappa_{d-1}\; t.$$

Let $\alpha(n)$ be a given positive function with $\lim_{n\to\infty}\alpha(n)=0$. 
From now on,  we fix $\varepsilon=1$, set $m_d=\sqrt{2\pi d}+1$, and assume $n$ to be sufficiently large.

Let $Q_1,\ldots, Q_m$ be a saturated set of points on $S^{d-1}$ such that 
$d_S(Q_i,Q_j)\geq \alpha(n)/2$ for any $i\neq j$. It follows from Lemma~1 that
\begin{align*}
m &\leq 2 \frac{8^{\frac{d-1}{2}}d\kappa_d}{\kappa_{d-1}} (\alpha(n)/2)^{-(d-1)}\\
&=2 \frac{2^{\frac{d-1}{2}}d\kappa_d}{\kappa_{d-1}} \alpha(n)^{-(d-1)}\\
&= c_d\; \alpha(n)^{-(d-1)}.
\end{align*}

Consider 
$n$ independent random points from $S^{d-1}$ chosen according to the uniform
probability distribution and consider the corresponding spherical zones\\ $\Pi_1,\ldots, \Pi_n$ of (spherical) half-width $m_d\alpha(n)$ whose poles are these points. Furthermore, let $\Pi_i^-$, $\Pi_i^+$ be the corresponding planks of half-width $(m_d-1)\alpha(n)$ and $(m_d+1)\alpha(n)$, respectively.

Now, we are going to estimate the probability of the event that there exists a point $p$ on $S^{d-1}$ which belongs to at least $k=k(n)$ zones. 
The probability that a point $p\in S^{d-1}$ belongs to a spherical plank $\Pi_i^+$ can be estimated from above as follows.
$${\mathbb P}(p\in\Pi_i^+)\leq \frac{4(m_d+1)(d-1)\kappa_{d-1}}{d\kappa_d}\;\alpha(n)=C_d^*\; \alpha(n).$$
Note that $C_d^*=O(d)$ as $d\to\infty$. 

Then
\begin{align*}
&{\mathbb P}(\exists p\in \Pi_{i_1}\cap\dots\cap\Pi_{i_k}:\text{ for some }1\leq i_1<\ldots <i_k\leq n)\\
\leq\ &{\mathbb P}(\exists Q_j\in\Pi^+_{i_1}\cap\dots\cap\Pi^+_{i_k}:\text{ for some }1\leq i_1<\ldots <i_k\leq n)\\
\leq\ &m\cdot{\mathbb P}(Q_1\in\Pi^+_{i_1}\cap\dots\cap\Pi^+_{i_k}:\text{ for some }1\leq i_1<\ldots <i_k\leq n)\\
\leq\ &m\cdot\binom n{k(n)}\left(C_d^*\;\alpha(n)\right)^{k(n)}\\
\leq\ &c_d\;\alpha(n)^{-(d-1)}\binom n{k(n)}\left(C_d^*\;\alpha(n)\right)^{k(n)}
\end{align*}

An application of the Stirling-formula (cf. Page 10 of \cite{FNN2016}) yields that
\begin{equation}\label{binom}
\binom nk\leq C \frac{n^n}{k^k(n-k)^{n-k}}
\end{equation}
for some suitable constant $C>0$. 

Then applying \eqref{binom} we get that
\begin{align}
&c_d\; \alpha(n)^{-(d-1)}\binom n{k(n)}\left(C_d^*\;\alpha(n)\right)^{k(n)}\notag\\
\leq &c_d\;\alpha(n)^{-(d-1)}  \cdot C  \frac{n^n (n-k(n))^{k(n)}}{\left(k(n)\right)^{k(n)-n}}
\left(C_d^*\;\alpha(n)\right)^{k(n)}\notag\\
\leq &\tilde{c_d}\;\alpha(n)^{k(n)-d+1}\left (\frac{n}{k(n)}\right )^{k(n)} (e\cdot C_d^*)^{k(n)}\label{eq:final}\notag\\
&=\tilde{c_d}\;\alpha(n)^{-(d-1)} \left (\frac{e\;C_d^*\; n\;\alpha(n)}{k(n)}\right )^{k(n)}.
\end{align}

\label{limit}

By \eqref{eq:limit} we obtain
$$\limsup_{n\to\infty}{\mathbb P}(\exists p\in \Pi_{i_1}\cap\dots\cap\Pi_{i_k}:\text{ for some }1\leq i_1<\ldots <i_k\leq n)<1,$$
therefore the probability of the event that no point of $S^{d-1}$ belongs to at least $k(n)$ zones is positive for sufficiently large $n$. This finishes the proof of Theorem~\ref{thm:ujmain}.
 
\subsection{Proof of Theorem~\ref{thm:main}}
Let $\alpha(n)=\frac{\ln n}{n}$, and let $k(n)=A_d\ln n$, where $A_d$ be a suitable positive constant  that satisfies the following equation 
$$\left (\frac{C_d^*}{x}\right )^{x}=e^{-d-x}.$$
Then
\begin{align}
	\eqref{eq:limit}&=\lim_{n\to\infty}\tilde{c_d}\frac{n^{d-1}}{(\ln n)^{d-1}}\cdot n^{A_d}\left (\frac{C_d^*}{A_d}\right )^{A_d\ln n}=0.\label{eq:final}
\end{align}

Furthermore, in this case the probability that an arbitrary fixed point $p$ of $S^{d-1}$ is in $\Pi_i^-$ (for a fixed $i$) is
$${\mathbb P}(p\in\Pi_i^-)\geq 2^{-1}\cdot \frac{2(d-1)\kappa_{d-1}}{d\kappa_d}\cdot (m_d-1)\alpha(n).$$
Using the inequality 
$\frac{\kappa_{d-1}}{d\kappa_d}>\frac{1}{\sqrt{2\pi d}}$ (cf. Lemma~1 in  \cite{BGW1982}), we obtain that
$${\mathbb P}(p\in\Pi_i^-)\geq \frac{(m_d-1)(d-1)}{\sqrt{2\pi d}}\cdot \frac{\ln n}{n}=(d-1)\frac{\ln n}{n}$$
Thus, the probability that $\cup_{1}^{n}\Pi_i$ does not cover $S^{d-1}$ satisfies 
\begin{align*}
{\mathbb P}(S^{d-1}\not\subseteq\cup_{1}^{n}\Pi_i) &\leq 
{\mathbb P}(\exists Q_j\notin\cup_{1}^{n}\Pi_i^-)\\
&\leq m\cdot {\mathbb P}(Q_1\notin\cup_{1}^{n}\Pi_i^-)\\
&\leq  c_d \left (\frac{n}{\ln n}\right )^{d-1}\cdot
\left (1-(d-1)\frac{\ln n}{n}\right )^n\\
&\leq 2c_d \left (\frac{1}{\ln n}\right )^{d-1}
\end{align*}
for a sufficiently large $n$. Therefore
\begin{equation}\label{eq:firstcase}
\lim_{n\to\infty}{\mathbb P}(S^{d-1}\not\subseteq\cup_{1}^{n}\Pi_i)=0.
\end{equation}

Thus, taking into account \eqref{eq:final} and \eqref{eq:firstcase}, the probability of the event that all $S^{d-1}$ is covered by the zones and no point of $S^{d-1}$ belongs to more than $A_d\ln n$ zones is positive for sufficiently large $n$. This finishes the proof of Theorem~\ref{thm:main}.

\begin{remark}
	We note that $A_d=O(d)$ as $d\to\infty$. Clearly, $A_d$ can be lowered slightly by taking into account all the factors of \eqref{eq:final}.	
\end{remark}

\begin{remark}
	We further note that one can obtain the result of Theorem~\ref{thm:main}  
	with the help of
	Theorem~1.6 
	of \cite{FNN2016} using the VC-dimension of hypergraphs; for more details we refer to the discussion in \cite{FNN2016} after Theorem~1.6. However, as this alternate proof is less geometric in nature, we decided to describe the more direct probabilistic proof of Theorem~\ref{thm:main}. We leave the proof of Theorem~\ref{thm:main} that uses the VC-dimension to the interested reader. Furthermore, the direct probabilistic argument provides an explicit estimate of the involved constant $A_d$, as well. 
\end{remark}

\subsection{Proof of Proposition~1}
Let $\alpha(n)=\frac{1}{n^{1+\delta}}$ for some $\delta>0$. If $k=k(n)>(d-1)/\delta+d-1$, then 
\begin{align*}
\limsup_{n\to\infty} \alpha(n)^{-(d-1)}\left (\frac{e\;C_d^*\; n\;\alpha(n)}{k(n)}\right )^{k(n)}&=\lim_{n\to\infty} n^{(1+\delta)(d-1)}
\left (\frac{e\;C_d^*\;n^{-\delta}}{k}\right )^k\\
&=\lim_{n\to\infty} n^{(1+\delta)(d-1)-\delta k}=0.
\end{align*}
This means that in this case, for sufficiently large $n$, it can be guaranteed that one can arrange $n$ zones of half-width $m_d\alpha(n)$ on $S^{d-1}$ such that no point belongs to more than $k=const.$ zones, and the value of $k$ only depends on $d$ and $\delta$. Moreover, if $\delta>d-1$, then $k=d$ suffices. Of course, in this case the zones cannot cover $S^{d-1}$. This proves i) of Proposition~1.

Now, let $\alpha(n)=\frac{1}{n}$, and let $k(n)=B_d\frac{\ln n}{\ln\ln n}$, where 
$B_d>\max \{e\;C_d^*, d-1\}$ is a positive constant. Then
\begin{align*}
\limsup_{n\to\infty} \alpha(n)^{-(d-1)}\left (\frac{e\;C_d^*\; n\;\alpha(n)}{k(n)}\right )^{k(n)}&=\lim_{n\to\infty}n^{d-1}\left(\frac{e\;C_d^*\;\ln\ln n}{B_d\;\ln n}\right )^{B_d\frac{\ln n}{\ln\ln n}}\\
&\leq \lim_{n\to\infty} \left(\frac{n^{\frac{(d-1)\ln\ln n}{B_d\ln n}}\ln\ln n}{\ln n}\right )^{B_d\frac{\ln n}{\ln\ln n}}=0,
\end{align*}
as
\begin{align*}
&\lim_{n\to\infty}\frac{n^{\frac{(d-1)\ln\ln n}{B_d\ln n}}\ln\ln n}{\ln n}\\
=&\lim_{n\to\infty}\exp\left (\frac{d-1}{B_d}\ln\ln n+\ln\ln\ln n-\ln\ln n\right )=0.
\end{align*}

This finishes the proof of part ii) of Proposition~1. The above statement is interesting because $\alpha(n)=\frac 1n$ is the smallest order of magnitude for the half-width of the zones for which one can possibly have a covering. 
\begin{remark}
We note that the $d=3$ special case of part ii) of Proposition~1 was explicitly proved by
Frankl, Nagy and Nasz\'odi in \cite{FNN2016} (cf. Theorem~4.1) in a slightly different form both by the probabilistic method and using VC-dimension. We also note that the general $d$-dimensional statement of part ii) of Proposition~1 may also be proved from Theorem~1.6 of \cite{FNN2016}.
\end{remark}

\section{Acknowledgements}
{The authors thank Bal\'azs Csik\'os (E\"otv\"os University, Budapest, Hungary) for the suggestion to use induction on the dimension in the proof of Theorem~\ref{lowerbound}.}

Research of A. Bezdek was partially supported by ERC Advanced Research Grant 267165 (DISCONV).

F. Fodor, V. V\'\i gh, and T. Zarn\'ocz were supported by the {National Research, Development and Innovation Office – NKFIH 116451 grant.} 

{F. Fodor was also supported by the National Research, Development and Innovation Office – NKFIH K134814 grant.}

V. V\'\i gh was supported by the J\'anos Bolyai Research Scholarship of the Hungarian Academy of Sciences and by the Hungarian NKFIH grant FK135392.

{This research was supported by project TKP2021-NVA-09. Project no. TKP2021-NVA-09 has been implemented with the support provided by the Ministry of Innovation and Technology of Hungary from the National Research, Development and Innovation Fund, financed under the TKP2021-NVA funding scheme.}
\begin{bibdiv}
\begin{biblist}

\bib{BGW1982}{article}{
	author={Betke, U.},
	author={Gritzmann, P.},
	author={Wills, J. M.},
	title={Slices of L. Fejes T\'oth's sausage conjecture},
	journal={Mathematika},
	volume={29},
	date={1982},
	number={2},
	pages={194--201 (1983)},
	issn={0025-5793},
}

\bib{BFH13}{article}{
	author={B\"{o}r\"{o}czky, K\'{a}roly J.},
	author={Fodor, Ferenc},
	author={Hug, Daniel},
	title={Intrinsic volumes of random polytopes with vertices on the
		boundary of a convex body},
	journal={Trans. Amer. Math. Soc.},
	volume={365},
	date={2013},
	number={2},
	pages={785--809},
	issn={0002-9947},
}

\bib{BV2003}{article}{
	author={B\"or\"oczky, K\'aroly, Jr.},
	author={Wintsche, Gergely},
	title={Covering the sphere by equal spherical balls},
	conference={
		title={Discrete and computational geometry},
	},
	book={
		series={Algorithms Combin.},
		volume={25},
		publisher={Springer, Berlin},
	},
	date={2003},
	pages={235--251},
}

\bib{ER1961}{article}{
	author={Erd\H os, P.},
	author={Rogers, C. A.},
	title={Covering space with convex bodies},
	journal={Acta Arith.},
	volume={7},
	date={1961/1962},
	pages={281--285},
	issn={0065-1036},
}	

\bib{FTGK}{article}{
	author={Fejes T\'oth, G\'abor},
	author={Kuperberg, W\l odzimierz},
	title={Packing and covering with convex sets},
	conference={
		title={Handbook of convex geometry, Vol.\ A, B},
	},
	book={
		publisher={North-Holland, Amsterdam},
	},
	date={1993},
	pages={799--860},
}
	
\bib{LFT1973}{article}{
	author={Fejes T{\'o}th, L.},
	title={Research Problems: Exploring a Planet},
	journal={Amer. Math. Monthly},
	volume={80},
	number={9},
	date={1973},
	pages={1043--1044},
}

\bib{FVZ2016}{article}{
	author={Fodor, F.},
	author={V\'\i gh, V.},
	author={Zarn\'ocz, T.},
	title={Covering the sphere by equal zones},
	journal={Acta Math. Hungar.},
	volume={149},
	date={2016},
	number={2},
	pages={478--489},
	issn={0236-5294},
}

\bib{FNN2016}{article}{
	author={Frankl, N\'{o}ra},
	author={Nagy, J\'{a}nos},
	author={Nasz\'{o}di, M\'{a}rton},
	title={Coverings: variations on a result of Rogers and on the epsilon-net
		theorem of Haussler and Welzl},
	journal={Discrete Math.},
	volume={341},
	date={2018},
	number={3},
	pages={863--874},
	issn={0012-365X},
}

\bib{FK2008}{article}{
	author={F\"uredi, Z.},
	author={Kang, J.-H.},
	title={Covering the $n$-space by convex bodies and its chromatic number},
	journal={Discrete Math.},
	volume={308},
	date={2008},
	number={19},
	pages={4495--4500},
	issn={0012-365X},
}

\bib{GKP2022}{article}{
    author = {Glazyrin, Alexey},
    author = {Karasev, Roman}, 
    author = {Polyanskii, Alexandr},
    title = {Covering by Planks and Avoiding Zeros of Polynomials},
    journal = {International Mathematics Research Notices},
    volume = {2023},
    number = {13},
    pages = {11684-11700},
    year = {2022},
    month = {10},
    issn = {1073-7928},
}

\bib{JP2017}{article}{
	author={Jiang, Zilin},
	author={Polyanskii, Alexandr},
	title={Proof of L\'{a}szl\'{o} Fejes T\'{o}th's zone conjecture},
	journal={Geom. Funct. Anal.},
	volume={27},
	date={2017},
	number={6},
	pages={1367--1377},
	issn={1016-443X},
}

\bib{KN2015}{article}{
	author={L. K\'erchy},
	author={G.P. Nagy},
	title={A 2015. \'evi Schweitzer Mikl\'os Matematikai
		Eml\'ekverseny},
	journal={https://www.bolyai.hu/files/Schweitzer\_2015\_beszamolo.pdf (last accessed: September 28, 2023)},
	date={2016},
	}

\bib{Linhart1974}{article}{
	author={Linhart, J.},
	title={Eine extremale Verteilung von Grosskreisen},
	journal={Elem. Math.},
	volume={29},
	date={1974},
	pages={57--59},
	issn={0013-6018},
}

\bib{MR3503722}{article}{
	author={Nasz\'odi, M\'arton},
	title={On some covering problems in geometry},
	journal={Proc. Amer. Math. Soc.},
	volume={144},
	date={2016},
	number={8},
	pages={3555--3562},
	issn={0002-9939},
}

\bib{Rosta1972}{article}{
	author={Rosta, V.},
	title={An extremal distribution of three great circles},
	language={Hungarian},
	journal={Mat. Lapok},
	volume={23},
	date={1972},
	pages={161--162 (1973)},
	issn={0025-519X},
}

\bib{T1932}{article}{
	author={Tarski, Alfred},
	title={Remarks on the degree of equivalence of polygons}, 
	journal={Parametr.}, 
	volume={2},
	pages={310--314}, 
	date={1932},
}
\end{biblist}
\end{bibdiv}
\end{document}